\begin{document}

\title*{Overview on uncertainty quantification in traffic models via intrusive method}
\author{Elisa Iacomini}
\institute{Elisa Iacomini \at  Institut für Geometrie und Praktische Mathematik RWTH Aachen, Templergraben 55 52056 Aachen Germany, \email{iacomini@igpm.rwth-aachen.de}
}
%
%
\maketitle

\abstract*{Each chapter should be preceded by an abstract (no more than 200 words) that summarizes the content.}

\abstract{We consider traffic flow models at different scales of observation.  Starting from the well known hierarchy between microscopic, kinetic and macroscopic scales, we will investigate the propagation of uncertainties through the models using the stochastic Galerkin approach. Connections between the scales will be presented in the stochastic scenario and numerical simulations will be performed.
 }

\section{Introduction}
\label{sec:intro}


{In the last decades, several traffic models have been developed and investigated at different spacial and temporal scales.}

Starting from the natural idea of tracking every single vehicle, several follow-the leader models grew up for computing positions, velocities and accelerations of each car by means of systems of ordinary differential equations (ODEs) \cite{bando1995dynamical,newell1961nonlinear}. Zooming out, the approaches vary from kinetic \cite{herty2020bgk,puppo2016kinetic,tosin2019kinetic}, which provides a statistical description of traffic taking into account cars-to-cars interactions and mass distribution of traffic, to macroscopic fluid-dynamics \cite{aw2000SIAP,lighthill1955RSL,Piccoli2012review}, focusing on average quantities by means of partial differential equations (PDEs), in particular conservation laws.

Hierarchies and links between the those scales have been widely studied, in particular mathematical connections between different types of models, especially microscopic and kinetic ones which converge to macroscopic models in certain suitable limit, \cite{aw2002derivation,di2017many,herty2020bgk,helbing2001TR,tosin2021boltzmann}.

The choice of the scale of observation mainly depends on the aim of the modeling, i.e. forecast traffic evolution on highways or traffic jams at a junction, on the number of involved vehicles and so on. Unfortunately, due to the particular structure of the corresponding models, they are usually studied in rather separate works.
  
Therefore, one of the main focus of this work is to provide a uniform setting to study uncertainty quantification in traffic at different scales.


Indeed, in recent works it has been pointed out how traffic is exposed to the presence of various sources and types of uncertainty, also at different scales of observation \cite{tosin2021uncertainty,wegener1996kinetic}. For example, it is well known that real data may be affected by noise and errors in the measurements, or that the reaction time of drivers and cars is not a deterministic event. Therefore, investigating how to include and to model the uncertainty at different scales in a consistent way, and how it affects the existing hierarchy are the main goals of this work.

We will introduce the uncertainty in the initial data at a microscopic, mesoscopic and macroscopic scale respectively and we will analyze the obtained stochastic models. In order to study the propagation of input uncertainty through the models, several approaches have been proposed in the literature and can be classified in non-intrusive, e.g. based on sampling (Monte-Carlo)  or based on collocation \cite{S4}, and intrusive methods \cite{S21,Gottlieb2001}, where the stochastic quantities are described by a series of orthogonal functions, known as generalized polynomial chaos (gPC) expansion \cite{S2,S16, S1,S3}.  The idea is then to substitute the series in the governing equations and project, using a Galerkin projection, in order to obtain a system of deterministic coefficients. From the coefficients, stochastic moments can be recovered at each point in space and time without performing the simulation several times, as in the Monte-Carlo simulations. The detailed knowledge of the stochastic moments, allowed by the intrusive methods both in space and time, led us to follow this approach, namely the stochastic Galerkin method.

Many challenges arise here, since some desired properties of the original system are not necessarily transferred to the intrusive formulation \cite{S15}. In particular, we refer to the hyperbolicity in the macroscopic models, \cite{FettesPaper,SI2}. In fact, the deterministic Jacobian of the projected system differs from the random Jacobian of the original system. For more details we refer to \cite{gerster2021stability}. 
Therefore, applying the intrusive stochastic Galerkin method to hyperbolic equations is an active field of research ~\cite{H5,kusch2019maximum,petterson}. 

\vspace{0.3cm}

The paper is organized as follows: uncertainty and the stochastic Galerkin method are introduced in Section \ref{sec:unc}.  In Section \ref{sec:micro} stochastic microscopic models at first and second order are presented. Section \ref{sec:meso} deals with the mesoscopic scale, i.e. stochastic kinetic models are considered, in particular the BGK model. In Section \ref{sec:macro} the macroscopic models are described and recovered from the previous ones and the main properties are illustrated. Numerical tests will be provided. We conclude the work with some comments and an overview on future directions.

\section{Stochastic Galerkin approach}\label{sec:unc}

 In order to describe the presence of uncertainty, we introduce a (possibly multi-dimensional) random variable $\omega$. Let $\omega$ be defined on the probability space $(\Omega_\omega,\mathcal{F}(\Omega),\mathbb{P})$. Then we denote by $\xi = \xi(\omega):\Omega_\omega \to \Omega \subset \mathbb{R}^d$ a (possibly $d$-dimensional) real-valued random variable. Assume further that $\xi$ is absolutely continuous with respect to the Lebesgue measure on $\mathbb{R}^d$ and denote by ${p_\Xi}(\xi):\Omega \to\mathbb{R}_+$ the probability density function of $\xi$. For simplicity we assume that the uncertainty enters only in the initial data, so that it affects the initial configuration. 

To study the arising stochastic models, we briefly recall the intrusive approach we will employ, namely the stochastic Galerkin method introduced by \cite{S21}.\\
Here, a random field $u(t,\xi)$, namely the stochastic input, can be expressed  by a spectral expansion \cite{S16}
under the assumption of being sufficiently regular and in particular $L^2_p (\Omega)$
\begin{equation}\label{eq:gPC_generic}
u_i(t,\xi)= \sum_{k=0}^\infty \hat{u}_{k}(t) \phi_k(\xi) 
\end{equation} 
where  $\phi_k \in L^2(\Omega, p_\Xi)$ are basis functions, typically chosen orthonormal with respect to the weighted scalar product, and $\{ \hat{u}_{k}(t) \}_{k=0}^\infty$ is a set of coefficients:
\begin{equation}\label{eq:gPC0}
\hat{u}_{k}(t) = \int_\Omega u(t,\xi) \phi_k(\xi) p_\Xi(\xi) d\xi.
\end{equation}
The previous expansion is truncated at $K$ to obtain an approximation with $K+1$ moments.  The projection of $u(t,\cdot)$ to the span of the $K+1$ base functions is denoted by
\begin{equation}\label{eq:projection}
{G}_K( u(t,\cdot) )(\xi) := \sum\limits_{k=0}^K \hat{u}_{k}(t) \phi_k(\xi) \; \quad {a.e.} \ \xi \in \Omega.
\end{equation}
The expansion \eqref{eq:gPC_generic} is called generalized polynomial chaos expansion (gPC) \cite{S16}. 
Under mild condition on the probability measure the truncated expansion converges in the sense $|| {G}_K( u(t,\cdot) )(\xi) - u(t,\cdot) || \to \infty$  for $K\to \infty$ as shown in \cite{S2}.
A weak approximation of stochastic systems is obtained substituting the truncated expansion \eqref{eq:projection} into the system itself and projecting the resulting expression onto the subspace of $L^2(\Omega, \mathbb{P})$ spanned by the basis $\{ \phi_k(\xi) \}_{k=0}^K$. 

Moreover  let us recall, as in \cite{FettesPaper,S16} the Galerkin product for any finite $K>0$ and any  ${u}, {z} \in L^2(\Omega,p_\Xi)$,
$\hat{u}=(\hat{u}_i)_{i=0}^K$, $\hat{z}:=(\hat{z}_i)_{i=0}^K$ and for all  $i,j,\ell=0,\dots,K:$
\begin{align*}
\mathcal{G}_K[u, z](t,\xi)
&:=
\sum_{k=0}^{K} ( \hat{u} \ast \hat{z} )_k(t) \phi_k(\xi), \\
(\hat{u} \ast \hat{z})_k(t)
&:=
\sum_{i,j=0}^{K} \hat{u}_i(t) \hat{z}_j(t) \mathcal{M}_\ell, \\
\left( \mathcal{M}_\ell \right)_{i,j} &:= \int_\Omega \phi_i(\xi) \phi_j(\xi)\phi_\ell(\xi) p_\Xi(\xi) d\xi.
\end{align*}
Note that $\mathcal{M}_\ell$ is a symmetric matrix of dimension $(K+1) \times (K+1)$ for any fixed $\ell \in \{0,\dots,K\}.$ 
Moreover, we have  ${\hat{u} \ast \hat{z} = \mathcal{P}(\hat{u}) \hat{z}}$ for $\mathcal{P} \in \mathbb{R}_+^{K+1 \times K+1}$ and $\hat{u} \in \mathbb{R}_+^{K+1}$ defined by
\begin{equation}\label{DefPalpha}
\mathcal{P}(\hat{u})\coloneqq\sum\limits_{\ell=0}^K \hat{u}_\ell\mathcal{M}_\ell.	
\end{equation}
The Galerkin product is symmetric, but not associative ~\cite{S15,S4,S18}.
However, the Galerkin product defined by $\mathcal{G}_K$ is not the only possible projection of the product of random variables $u,z$ on the subspace  $span\{ \phi_0, \dots, \phi_K \},$ nevertheless here we stick to this choice. 

A challenge occurs since only the gPC modes corresponding to the initial data are known. To determine them for $t>0$ we derive a differential equation called stochastic Galerkin formulation, that describes their propagation in time and space. Recovering the stochastic Galerkin formulation at different scales of observation is the aim of the following sections.

\section{Microscopic scale}\label{sec:micro}

Studying individual vehicles and their interactions is the purpose of microscopic models. 
Starting from cars' positions and velocities, the trajectory of vehicles is reconstructed by means of dynamical systems, in the form of ODEs systems. 
We assume that $N$ vehicles are moving along a single-lane infinite road where overtaking is not possible. This means that cars are ordered and have to follow the first one, namely the \textit{leader}. Cars that are not the leader are termed \textit{followers}. We indicate the position $x_i(t)$ and the velocity {$v_i(t)$} of each vehicle $i=1,\dots,N$ at different time $t$.

The most simple dynamics can be described by a first order system of ODEs where the velocity is given as a known function $s(\cdot)$, which depends on the distance between the position of two consecutive vehicles, i.e. $s\left(\frac{L}{x_{i+1}-x_i}\right)$ where $\Delta x_i=x_{i+1}-x_i$ and $L$ is the length of the cars. For simplicity we assume all the vehicles have the same length.

However, one of the main difficulties for drivers might be to estimate their distance from the vehicle in front, which would then influence the velocity itself. Thus, we introduce the random variable $\xi$, as stated in Sec. \ref{sec:unc}, in the initial data as $\Delta x_i (\xi) = x_{i+1} - x_i + \xi$. The uncertainty then affects also the positions, since the velocity is a function of $\Delta x_i$. Therefore we are interested in the evolution of $x_i(t,\xi):\mathbb{R}^+\times \Omega$, for $i=1,\dots,N$:
\begin{align}\label{eq:micro_first_uq}
\begin{cases} 
\dot{x}_i(t,\xi)= {v}_i(t,\xi) \qquad &i=1,\dots N\\
{v}_i(t,\xi) = s\left(\frac{L}{x_{i+1}-x_i+\xi}\right) \quad& i=1,\dots, N-1\\
{v}_N = \bar{s}. &
\end{cases}
\end{align}
Note that the leader, the $N$-th vehicle, has its own dynamics, i.e. an assigned speed value $\bar{s}$. 

Following the method presented in Sec.\ \ref{sec:unc}, the random field $x_i(t,\cdot)$ can be expressed by a spectral expansion 
$x_i(t,\xi)= \sum_{k=0}^\infty \hat{x}_{i_k}(t) \phi_k(\xi)$, where $\hat{x}_{i_k}$ is the $k$-coefficient of the vehicle $i$. The stochastic Galerkin formulation of \eqref{eq:micro_first_uq} reads as:
\begin{align}\label{eq:micro_first_gpc}
\begin{cases} 
\dot{\hat{x}}_{i_k}= \hat{v}_{i_k} \qquad &i=1,\dots N\\
\hat{v}_{i_k} = \widehat{s_{i_k} }\left( \frac{L}{\Delta {x}_{{i}}} \right)  \quad& i=1,\dots, N-1\\
\hat{v}_N = \bar{s}\ e_1 &
\end{cases}
\end{align}

where $\widehat{s_{i_k}}=\int_\Omega s\left( \frac{L}{x_{i+1} - x_i + \xi} \right) \Phi_k (\xi) p(\xi) d \xi $, $e_1=(1,0,\dots,0)^T$, since $\bar{s}$ is a deterministic value. Note that the following approximation holds  when $s$ is linear: $\widehat{s_{i_k}} \left( \frac{L}{\Delta {x}_{{i}} }\right) \approx s\left( \frac{L}{\Delta \hat{x}_{{i}_k}} \right)$, where $\Delta \hat{x}_{{i}_k}=\hat{x}_{{i+1}_k}-\hat{x}_{{i}_k}$.

The system \eqref{eq:micro_first_gpc} is now deterministic with no explicit dependence on the random variable $\xi$. We will show in Sec.\ \ref{sec:micro_to_macro} how we can recover the macroscopic stochastic model starting from \eqref{eq:micro_first_gpc}.

Furthermore, the acceleration term might be also taken into account, which leads to consider second order models:
\begin{align}\label{eq:micro_sec}
\begin{cases}
\dot{x}_i(t,\xi)= {v}_i (t,\xi)\qquad &i=1,\dots N\\
\dot{v}_i(t,\xi) = a(x_{i+1}(t,\xi),x_i(t,\xi),v_{i+1}(t,\xi), v_i(t,\xi)) & i=1,\dots, N-1\\
\dot{v}_N = \bar{a} &
\end{cases}
\end{align}

where $\bar{a}$ describes the dynamics of the leader, independent from the other vehicles. The second equation describes the acceleration which depends on positions and velocities of two consecutive vehicles. Here we consider the formula described in \cite{aw2000SIAP,piu2022stability}, namely $a= C \frac{v_{i+1}(t,\xi)-v_i(t,\xi)}{\Delta x_i^2(t,\xi)}+\frac{A}{t_r}(s(\frac{L}{\Delta x_i(t,\xi)})-v_i(t,\xi))$. In particular, this choice of the acceleration term takes into account the difference with the velocity of the car in front, weighted with the distance between the vehicles, and the attitude to travel with an optimal velocity given by $s$. 

The stochastic Galerkin approach can be applied to \eqref{eq:micro_sec} in the same way. Explicitly, the expanded dynamics of system \eqref{eq:micro_sec} with that particular choice of acceleration function reads as:
\begin{align}\label{eq:micro_sec_st}
\begin{cases}
\sum_{k=0}^\infty \dot{\hat{x}}_{i_k}(t) \phi_k(\xi)=&\sum_{k=0}^\infty \hat{v}_{i_k}(t)\phi_k(\xi)\qquad \\
\sum_{k=0}^\infty \dot{\hat{v}}_{i_k}(t)\phi_k(\xi)=&C \left( \sum_{k=0}^\infty \Delta \dot{\hat{x}}_{i_k}\phi_k(\xi) \right)\ast  \left( \sum_{k=0}^\infty \Delta \hat{x}_{i_k}\phi_k(\xi) \right)^{-2} +\\&\frac{A}{t_r} \left( \widehat{s_{i_k} }  -\sum_{k=0}^\infty \hat{v}_{i_k}(t)\phi_k(\xi)  \right)\\
\dot{v}_N =& \bar{a}  
\end{cases}
\end{align}
where $\Delta \dot{\hat{x}}_{{i}_k}=\hat{v}_{{i+1}_k}-\hat{v}_{{i}_k}$.   
Then, employing the notation introduced above, the projection of \eqref{eq:micro_sec_st} corresponds to
\begin{align}\label{eq:micro_sec_gpc}
\begin{cases}
 \dot{\hat{x}}_{i_k}(t) = \hat{v}_{i_k}(t)\qquad  &i=1,\dots N \\
 \dot{\hat{v}}_{i_k}(t)=C \left( \mathcal{P}^{-2} (\Delta \hat{x}_{i_k}  )\Delta \dot{\hat{x}}_{i_k} \right)   +\frac{A}{t_r} \left( \widehat{s_{i_k} } -\sum_{k=0}^\infty \hat{v}_{i_k}(t)  \right) & i=1,\dots N-1 \\
\dot{v}_N =\bar{a}.  &
\end{cases}
\end{align}

\section{Mesoscopic scale}\label{sec:meso}
Kinetic traffic flow models provide a statistical description of traffic, taking into account both car-to-car interactions and mass distribution of traffic. Such a mixture is particularly suitable for taking uncertain parameters into account. Therefore several works have been done to study the uncertainty at the mesoscopic scale, starting from \cite{wegener1996kinetic} to more recent contributions \cite{tosin2019kinetic,tosin2021boltzmann,tosin2021uncertainty,zanella2020structure}.

Here we consider a kinetic traffic models class of BGK (Bhatnagar, Gross and Krook \cite{bhatnagar1954model}) type, which got recently new insights thanks to the works done in \cite{herty2022uncertainty,herty2020bgk}. In the former, the derivation of a well-posed model linked to the hierarchy of the scales has been provided in the deterministic framework. The latter instead focuses on the stochastic scenario. 

To begin with, we introduce the desired speed $w_i=v_i+h(\rho_i)$, where $h=h(\rho_i):\mathbb{R}^+\to\mathbb{R}^+$ is an increasing, differentiable function of the density called hesitation function \cite{fan2014comparative} which satisfies $h(\rho)\ge0$, $h'(\rho)\ge0$. The variable we will consider in the following is the mass distribution function of the flow, namely $g(t,x,w):\mathbb{R}^+\times\mathbb{R}\times W \to \mathbb{R}^+$, such that $gdxdw$ gives the number of vehicles at time $t\in \mathbb{R}^+$ with position in $[x,x+dx]\subset \mathbb{R}$ and desired speed in $[w,w+dw]\subset W$. Note that macroscopic variables like density and flux function can be recovered from $g$, namely $\rho(t,x)=\int_W g(t,x,w)dw$ and $q=\int_W w g(t,x,w) dw$.

Due to the difficulties in estimating the real distribution of vehicles, we consider a stochastic initial kinetic distribution function $g_0(x,w,\xi)$, where the uncertainty described by the random variable $\xi$ enters in the initial data.

We are interested in the evolution of the random field $g(t,x,w,\xi):\mathbb{R}^+\times\mathbb{R}\times W \times \Omega \to \mathbb{R}^+$ governed by the BGK-kinetic equation:
\begin{align}\label{eq:BGK-stoch}
&	\partial_t g(t,x,w,\xi)+\partial_x\Big[ (w-h(\rho(t,x,\xi))) g(t,x,w,\xi)\Big] = \frac{1}{\varepsilon} \Big( M_g(w;\rho(t,x,\xi)) - g(t,x,w,\xi) \Big), \\
&	g(0,x,w,\xi) = g_0(x,w,\xi), 
\end{align}
where $M_g(w;\rho)$ is the distribution at the equilibrium. 

Applying the intrusive method introduced in Sec.\ \ref{sec:unc}, the random field $g(t,x,w,\xi)$ can be approximated by the spectral expansion truncated at $K$, $g(t,x,w,\xi)=\sum_{k=0}^{K}\widehat{g}_i(t,x,w)\phi_i(\xi),$ and the stochastic Galerkin formulation for \eqref{eq:BGK-stoch} reads as
\begin{align}\label{eq:UBGK}
&	\partial_t \widehat{g}_i(t,x,w) + \partial_x   \Bigl( \Bigl( w Id -   \mathcal{P}\left(h(\widehat{\rho}\left(t,x\right))\right) \Bigr) \widehat{g}(t,x,w) \Bigr)_i
=\frac{1}{\varepsilon} \left( \widehat{M}_i\left(w;\widehat{\rho}(t,x) \right) -  \widehat{g}_i(t,x,w) \right), \\
&	\widehat{g}_i(0,x,w) = \int_\Omega g_{0}(t,x,w,\xi) \phi_i(\xi) {p_\Xi}(\xi) d\xi \qquad \text{for} \, \ i=0,\dots,K
\end{align}

where $\left( \mathcal{P}(h( \widehat{\rho}))  \widehat{g} \right)_i=\sum_{j=0}^K \int_\Omega h\left( \sum_{\ell=0}^K  \widehat{\rho}_\ell \phi_\ell(\xi)
\right)	 \widehat{g}_j \phi_j(\xi) \phi_i(\xi) p_\Xi(\xi) d\xi$ and $w$ is a deterministic variable.

Further, we define for $i=0,\dots,K$

\begin{align}
\widehat{M}_{i}\left(w;\widehat{\rho}(t,x) \right):=&\int_\Omega  M_g\Bigl(w;\sum_{\ell=0}^K \widehat{\rho}_\ell(t,x) \phi_\ell(\xi) \Bigr)\phi_i(\xi)  p_\Xi dw d\xi. 	\label{eq:mg}
\end{align}

Thus we recovered a stochastic Galerkin system, namely \eqref{eq:UBGK}, for the BGK model.\\
As in \cite{herty2022uncertainty}, it might be employed to detect and forecast regions of high risk of congestions or traffic instabilities. For a detailed analysis and numerical tests we refer to \cite{herty2022uncertainty}.


\section{Macroscopic scale}\label{sec:macro}
Macroscopic models describe traffic flow in terms of aggregate quantities as density, $\rho=\rho(x,t)$ and mean velocity $v=v(x,t)$ of vehicles at a location $x \in \mathbb{R}$ and time $t>0$. In contrast to the kinetic scale, each reference to the detailed level of vehicles' description is completely lost.

The natural assumption that the total mass is conserved along the road leads to impose that $\rho$ and $v$ satisfy:
$\partial \rho(x,t) + \partial_x \left( \rho(x,t) v(x,t) \right)=0, \ \rho(x,0)=\rho_0(x)$.

However, another relationship has to be provided in order to close the equation, i.e. we have a single equation for two fields. Depending on the closure we distinguish between first and second order models.

In first order models the velocity is given as a function of the density. Among them, one of the most relevant model was introduced by Lighthill, Whitham and Richards (LWR) \cite{lighthill1955RSL,richards1956OR},where a typical choice for the velocity function is $v(x,t)=V(\rho(x,t))=1-\rho$. 

In second order models instead an equation describing the variation of the velocity in time is added to the system. The prototype for second order macroscopic model is given by the ARZ model (Aw, Rascle \cite{aw2000SIAP} and Zhang \cite{zhang2002non}).

However, a challenge occurs when we take into account the uncertainty which affects vehicular traffic.
In the following, we are indeed interested in studying the stochastic scenario, where the uncertainty enters only in the initial data.

In particular, we assume $\rho_0$ to depend on the space and on the random variable $\xi$ introduced in Sec.\ \ref{sec:unc}, i.e. $\rho_0(x,\xi)$, for both first and second order macroscopic models. 
Under the same assumptions as in Sec.\ \ref{sec:unc}, the stochastic LWR model reads as follow:
\begin{align}\label{eq:SLWR}
\partial_t \rho(t,x,\xi) + \partial_x (\rho(t,x,\xi) \ V(\rho(t,x,\xi)))=0\\
\rho(0,x,\xi)=\rho_0(x,\xi).
\end{align}
Then  we follow the same procedure described in Sec. \ref{sec:unc} to get the stochastic Galerkin formulation of the system, which reads as:
\begin{align}\label{eq:SG_LWR}
	\partial_t \widehat{\rho}+\partial_x \left( \mathcal{P}(\widehat{\rho}(t,x)) \widehat{V}_{eq}(\widehat{\rho}(t,x)) \right) =\overrightarrow{0},
\end{align} 
with $\overrightarrow{0}=(0,\dots,0)^T$ vector of $K+1$ components. \\
Note that an arbitrary but consistent gPC expansion $\widehat{V}_{eq}$ is required.
For example, the corresponding gPC expansion of $V_{eq}=1-\rho$ is $\widehat{V}_{eq}=e_1- \widehat{\rho}$, with unit vector $e_1=(1,0,\dots,0)^T$. 

\vspace{0.3cm}

On the other hand, stochastic second order macroscopic models are more challenging to treat. Indeed, beside a more complicated structure, also the properties of the deterministic model have to be preserved, in particular the hyperbolicity of the system has to be ensured.\\
As second order model, we consider the ARZ model as stated before. 
In order to write it in a conservative form, an auxiliary variable is usually introduced, namely $z(t,x)=\rho(v+h(\rho))$. In the deterministic case the system reads as:
\begin{align}\label{eq:ARZ}
&		\partial_t{\rho}+\partial_x(z-\rho h(\rho))=0, \\
&		\partial_t {z}+\partial_x \left(\frac{z^2}{\rho} - z h(\rho)\right)=\frac{1}{\varepsilon}\left(  \left(\rho V_{eq}({\rho}) + \rho h(\rho) \right) -  z\right)\\
& \rho(0,x)=\rho_0(x), \ z(0,x)=z_0(x). \end{align}

As before, we introduce the uncertainty in the initial data, and more precisely in the initial density. This affects also the second equation, which means we end up with a stochastic system depending on $\rho(t,x,\xi), z(t,x,\xi)$.

\begin{remark}
A naive approach would be to substitute the truncated expansion for $\rho$ and $v$ into the stochastic system and then use the Galerkin projection onto the space spanned by the basis functions leads to a loss of hyperbolicity \cite{gerster2021stability}. Indeed, in this case the jacobian of the flux has not necessarily real eigenvalues and a full set of eigenvectors. 
\end{remark}

In order to compute the gPC expansion for the term $\frac{z^2}{\rho}$, the Riemann invariant $w=v+h(\rho)$ is taken into account, such that ${z}=\rho w$, which leads to $\widehat{z}=\mathcal{P}(\widehat{\rho})\widehat{w}$ as in \cite{gerster2021stability}. According to this, the term $\frac{z^2(t,x,\xi)}{\rho(t,x\xi)}=z(t,x,\xi)w(t,x,\xi)$ and the corresponding gPC expansion is $\widehat{z}*\widehat{w}=\mathcal{P}( \widehat{z} ) \mathcal{P}^{-1}(\widehat{\rho}) \widehat{z}$. Thus the stochastic Galerkin formulation for the ARZ model reads as:

\begin{align}
	\partial_t \widehat{\rho} + \partial_x \left( \hat{z}
	-
	\mathcal{P}(\widehat{\rho}) \widehat{h}(\widehat{\rho})\right) = \overrightarrow{0} \\ 
	\partial_t \widehat{z} + \partial_x \left(
	\mathcal{P}( \widehat{z} ) \mathcal{P}^{-1}(\widehat{\rho}) \widehat{z}
	-
	\mathcal{P}(\widehat{z}) \widehat{h}(\widehat{\rho})\right)=\overrightarrow{0}.
\end{align}

In order to ensure the hyperbolicity of the system as proved in \cite[Thm 2]{gerster2021stability}, the basis functions have to fulfill the following properties
\begin{itemize}
	\item[\textup{(A1)}] \quad
	The  matrices $\mathcal{M}_\ell$ and $\mathcal{M}_k$ commute for all ${\ell,k = 0,\ldots,K}$.
	\item[\textup{(A2)}]\quad
	The matrices~$\mathcal{P}(\widehat{u})$ and $\mathcal{P}(\widehat{z})$ commute for all ${\widehat{u}, \widehat{z} \in \mathbb{R}^{K+1}}$.
	\item[\textup{(A3)}]\quad
	There is an eigenvalue decomposition~${
		\mathcal{P}(\widehat{u}) = V \mathcal{D}(\widehat{u}) V^{T}
	}$
	with constant eigenvectors $V$.	
\end{itemize}
It has been shown that for example the one--dimensional Wiener--Haar basis and piecewise linear multiwavelets fulfill the previous assumptions, but, Legendre and Hermite polynomials do {\em not} fulfill those requirements.

\subsection{From micro to macro}\label{sec:micro_to_macro}
Connections between microscopic and macroscopic traffic flow models are already well established. Indeed there are several works in the literature investigating the limit for first and second order models as \cite{aw2002derivation,di2017many}. The natural question now is if and how the uncertainty influences the relationship between the scales.

\begin{proposition}
Let $\xi$ be a random variable as in Section \ref{sec:micro}, with $N$ cars of fixed length $L$. Assume that $s(\frac{L}{\Delta x})=v(\rho)$.  Then the stochastic ODEs system \eqref{eq:micro_first_uq} converges to the stochastic LWR model \eqref{eq:SLWR} for $L\to 0$ and $N\to \infty$.
\end{proposition}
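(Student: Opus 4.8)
The plan is to mimic the classical deterministic derivation of the LWR equation from the first--order follow--the--leader model (as in \cite{aw2002derivation,di2017many}) and to exploit the crucial structural fact that in \eqref{eq:micro_first_uq} the random variable $\xi$ enters only as a \emph{fixed parameter} (a shift of the initial spacing) and never as a dynamical variable: the ODE system holds, and can be solved, for $p_\Xi$--a.e.\ $\xi\in\Omega$ separately. Hence one carries out the entire discrete--to--continuum passage \emph{pathwise}, for a.e.\ frozen $\xi$, and then reassembles the limit as a random field.

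First I would attach to \eqref{eq:micro_first_uq} the discrete (empirical) density
\[
\rho_i(t,\xi) := \frac{L}{\Delta x_i(t,\xi)}, \qquad \Delta x_i(t,\xi)=x_{i+1}(t,\xi)-x_i(t,\xi)+\xi, \quad i=1,\dots,N-1 ,
\]
so that the assumption $s\!\left(\tfrac{L}{\Delta x}\right)=v(\rho)$ gives $v_i(t,\xi)=v(\rho_i(t,\xi))$. Since $\xi$ is constant in $t$, differentiating along solutions of \eqref{eq:micro_first_uq} yields the exact identity
\[
\dot\rho_i(t,\xi) = -\frac{\rho_i^2(t,\xi)}{L}\,\bigl(v(\rho_{i+1}(t,\xi)) - v(\rho_i(t,\xi))\bigr),
\]
equivalently $\dot\tau_i = \tfrac1L\bigl(v(1/\tau_{i+1})-v(1/\tau_i)\bigr)$ for the specific volume $\tau_i = \Delta x_i/L = 1/\rho_i$. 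This is precisely a monotone (upwind/Godunov--type) semidiscretization, on the Lagrangian car--index grid of mesh size $L$, of the LWR equation $\partial_t\rho+\partial_x(\rho\,v(\rho))=0$ written in mass coordinates.

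Second, with the usual scaling relating $N$ and $L$ (so that the occupied road length stays bounded), introduce the piecewise--constant reconstructions $\rho^L(t,x,\xi)$, $v^L(t,x,\xi)$ built from $(\rho_i,v_i)$ and, using monotonicity of $v$ and the monotone structure above, derive \emph{uniformly in $\xi$} a maximum principle $0\le\rho^L\le\rho_{\max}$, a discrete Ole\u{\i}nik/one--sided Lipschitz (total--variation) bound, and a time--equicontinuity estimate. These give $L^1_{loc}$--compactness of $\{\rho^L(\cdot,\cdot,\xi)\}$ for each fixed $\xi$ and identify the limit as the unique entropy solution of \eqref{eq:SLWR} with datum $\rho_0(\cdot,\xi)$, the latter being the limit of the initial reconstruction (which encodes the $\xi$--shifted spacings). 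Because the estimates are $\xi$--uniform and $\rho_0(\cdot,\xi)$ depends measurably on $\xi$, the pathwise limits assemble into $\rho^L\to\rho$ for $p_\Xi$--a.e.\ $\xi$, and, by dominated convergence, in $L^1(\Omega,p_\Xi;L^1_{loc})$; $\rho$ then solves \eqref{eq:SLWR}. Projecting with $G_K$ and using its $L^2_{p_\Xi}$--boundedness finally transfers the convergence to the gPC modes, linking \eqref{eq:micro_first_gpc} to \eqref{eq:SG_LWR}.

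\paragraph{Main obstacle.}
The delicate point is the second step: securing the compactness and entropy--consistency estimates \emph{uniformly in $\xi$}, in particular controlling $\mathrm{TV}\bigl(\rho_0^L(\cdot,\xi)\bigr)$ and excluding $\xi$--dependent blow--up of the discrete gradients, and handling the vacuum regime $\rho_i\to0$ (i.e.\ $\Delta x_i\to\infty$) where the factor $\rho_i^2/L$ degenerates and where the additive shift $\xi$ in the initial spacing is most influential. The monotone--scheme structure identified above is exactly what makes these uniform bounds attainable.
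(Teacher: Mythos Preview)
Your proposal is sound and shares with the paper the key structural observation: since $\xi$ enters only as a frozen parameter in the initial spacing, the micro--to--macro passage can be carried out pathwise, for a.e.\ $\xi$, exactly as in the deterministic case. Where you diverge from the paper is in the level of rigor and in the analytic machinery deployed. The paper's argument is purely \emph{formal}: it defines the local density $\rho_i^{(N)}=L/\Delta x_i$, derives the semidiscrete Lagrangian relation $\tfrac{d}{dt}\tfrac{1}{\rho^{(N)}(y,t,\xi)}=\tfrac{1}{L}\bigl(s(\rho^{(N)}(y+L,\cdot))-s(\rho^{(N)}(y,\cdot))\bigr)$, \emph{assumes} the existence of a pointwise limit $\rho(y,t,\xi)$ as $L=1/N\to0$, and then performs an explicit Lagrangian--to--Eulerian change of variables to land on \eqref{eq:SLWR}. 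No compactness, no Ole\u{\i}nik/TV estimates, no entropy identification, and no discussion of $\xi$--uniformity appear.

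Your route --- monotone--scheme bounds, $L^1_{loc}$ compactness, entropy consistency, and dominated convergence in $\xi$ --- is therefore a genuinely different and substantially stronger argument: it actually \emph{proves} convergence (to the entropy solution) rather than checking formal consistency, and it delivers the additional conclusion about convergence of gPC modes. What the paper's approach buys is brevity and an explicit display of the Lagrangian structure; what yours buys is an honest limit theorem. If your aim is to match the paper, you can drastically simplify: drop the compactness step, assume the limit exists, and perform the formal Lagrangian computation followed by the coordinate change. If your aim is a rigorous statement, your outline is the correct one, and the ``main obstacle'' you identify (uniform--in--$\xi$ TV control and the vacuum regime) is indeed the only nontrivial point, handled exactly as in \cite{di2017many} once $\xi$ is frozen.
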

\begin{proof}

First of all we recall that the uncertainty enters only in the initial data, as in Section \ref{sec:micro}-\ref{sec:macro}.
Then, we define the stochastic local density, according to the deterministic case \cite{aw2002derivation}:
\begin{equation}
	\rho_i^{(N)}(t,\xi)=\frac{L}{\Delta x_i(t,\xi)}=\frac{L}{ x_{i+1}(t,\xi) - x_i(t,\xi)} \qquad i=1,\dots,N-1
\end{equation}
with an abuse of notation, where we assume the uncertainty to be represented by $\xi$ both at the micro and at the macro level. Note that $\rho_i^{(N)}(t,\xi)$ is the same term which appears in the velocity function.

From the definition of local density
\begin{align}
	\frac{d}{dt} \frac{1}{	\rho_i^{(N)}(t,\xi)}&= \frac{1}{L}\left( \frac{d}{dt} x_{i+1}(t)  - \frac{d}{dt} x_{i}(t) +  \frac{d}{dt} \xi \right)\\
	&=\frac{1}{L}\left( s\left(\frac{L}{ x_{i+2}(t) - x_{i+1}(t) + \xi}\right)-s\left(\frac{L}{ x_{i+1}(t) - x_{i}(t) + \xi}\right) \right)\\
	&=\frac{s\left(\rho^{(N)}_{i+1}(t,\xi)\right) - s\left(\rho^{(N)}_{i}(t,\xi)\right)}{L}.
\end{align}

To compute the limit, we follow the procedure as in \cite{burger2018derivation,holden2018follow,tordeux2018traffic}. We consider the Lagrangian coordinate $y$, and we define
\begin{equation}
\rho^{(N)}(y,t,\xi)=\rho_i^{(N)}(t,\xi) \quad \text{if}\ y\in [iL,(i+1)L)
\end{equation}
so we get:
\begin{equation}\label{eq:der_vel}
	\frac{d}{dt} \frac{1}{	\rho^{(N)}(y,t,\xi)}=\frac{s\left(\rho^{(N)}(y+L,t,\xi)\right) - s\left(\rho^{(N)}(y,t,\xi)\right)}{L}.
\end{equation}
By defining 
\begin{equation}
	L=\frac{1}{N}
\end{equation}
and assuming that there exists a function $\rho(y,t,\xi)$ such that $\rho(y,t,\xi)=\lim_{N\to \infty} \rho^{(N)}(y,t,\xi)$, \eqref{eq:der_vel} can be rewritten:
\begin{equation}
\frac{d}{dt} \frac{1}{\rho(y,t,\xi)}= \partial_y s (\rho(y,t,\xi)).
\end{equation}

While $y$ represents the continuous number of cars, we are interested in the position of a car itself, namely $x(y,t)$:
\begin{equation}
	\partial_y x(y,t,\xi)=\lim_{L\to 0} \frac{x(y+L,t,\xi)- x(y,t,\xi)}{L}=\frac{1}{\rho(y,t,\xi)}
\end{equation}
so that
\begin{equation}
x(y,t,\xi)=\int_0^y\frac{1}{\rho(z,t,\xi)}dz,
\end{equation}
assuming $x(0,t,\xi)=0$. 

Then:

\begin{equation}
	\partial_t x(y,t,\xi) = \int_0^y \partial_t \frac{1}{\rho(z,t,\xi)}dz = \int_0^y \partial_z s(\rho(z,t,\xi))dz = s(\rho(y,t,\xi)).
\end{equation}

On the other hand, $y$ can be recovered as:
\begin{equation}
	y(x,t,\xi)=\left( \int_0^x \frac{1}{\rho(z,t,\xi)}dz \right)^{-1}
\end{equation}
and we can compute the derivative with respect to $x$ 
\begin{equation}\label{eq:rosa}
	\partial_x y (x,t,\xi)=\rho(y(x,t,\xi),t,\xi).
\end{equation}

Moreover
\begin{equation}
	\frac{d}{dt} y(x(y,t,\xi),t,\xi) = 0
\end{equation}
since $y(x(y,t+\Delta t,\xi),t+\Delta t,\xi)=y(x(y,t,\xi),t,\xi)$, for $\Delta t \ge 0$.\\
Explicitly
\begin{align}
	\frac{d}{dt} y(x(&y,t,\xi),t,\xi) = \partial_x y(x,t,\xi) \partial_t x( y,t,\xi)+ \partial_t y(x,t,xi) =0\\
	& \partial_t y(x,t,xi) =-s(\rho(y,t,\xi)) \rho(y(x,t,\xi),t,\xi). \label{eq:aran}
\end{align}

We define $\rho(x,t,\xi)=\rho(y(x,t,\xi),t,\xi)$ and we compute the time derivative exploiting \eqref{eq:rosa}-\eqref{eq:aran}
\begin{align}
	\partial_t \rho(x,t,\xi)=& \frac{d}{dt} \rho(y(x,t,\xi),t,\xi)= \\
	=&\partial_y  \rho(y(x,t,\xi),t,\xi)\partial_t y(x,t,\xi)+ \partial_t \rho(y(x,t,\xi),t,\xi)\\
	=&-s(\rho(x,t,\xi))\partial_x \rho(x,t,\xi)+\rho(x,t,\xi)\partial_x s (\rho(x,t,\xi))\\
	=& -\partial_x \left( s(\rho(x,t,\xi)) \rho(x,t,\xi)  \right).
\end{align}
Thus we recover the stochastic LWR model \eqref{eq:SLWR}.
\end{proof}

For second order models, a similar procedure can be applied.

\subsection{From meso to macro}\label{sec:meso_to_macro}

Kinetic and macroscopic scales are closely related, indeed one can recover the macroscopic quantities from the kinetic models, as stated in Sec.\ \ref{sec:meso}. 
Here we are interested in investigating the connections between the models described in Sec.\ \ref{sec:meso} and Sec.\ \ref{sec:macro}.
{We recall that we consider a particular kinetic model, the BGK model, where the interaction kernel is given by a linear term which described a relaxation towards the equilibrium, namely the right hand side term of} \eqref{eq:BGK-stoch}.
	
{In order to understand the intrinsic connection to the macroscopic scale}, let us consider traffic flow at equilibrium conditions. This means that at the kinetic level $M_g=g$ and the desired velocity is the velocity itself, $w=v$, which implies $h=0$. 
Under these assumptions, integrating \eqref{eq:BGK-stoch} with respect to $w$ we  immediately get the stochastic LWR if $w$ coincides with $V_{eq}$.

As far as concerns the second order macroscopic models, the link between the kinetic BGK model and the ARZ has been proved in the deterministic case in \cite{herty2020bgk}, while in \cite{herty2022uncertainty} the link has been extended also to the stochastic framework.
For completeness, we recall here the main result where a gPC formulation of the fluid model obtained  by the stochastic BGK model \eqref{eq:BGK-stoch} is derived.

Further, this model is compared with the stochastic ARZ model. The theorem shows that under assumption \eqref{eq:ass_thm1} the derived gPC  model is equivalent to the stochastic model of \cite{gerster2021stability}. Therein, it has also been shown that the partial differential equation is hyperbolic.
{
\begin{theorem}\label{thm1} \cite[Thm 2.2]{herty2022uncertainty}
	Let $K>0, \varepsilon>0$. Assume the base functions $\{ \phi_0, \dots, \phi_K \}$ fulfill (A1)--(A3) and assume that 
		\begin{align}
	&     \int_W \widehat{M}_i\left(w;\widehat{\rho}(t,x) \right) \ dw = \widehat{\rho}_i(t,x),	\label{eq:coeff_um1}\tag{UM1}\\
	&     \int_W w \ \widehat{M}_i\left(w;\widehat{\rho}(t,x) \right) dw = \Bigl( \mathcal{P} (V_{eq}(\widehat{\rho}(t,x)))\widehat{\rho}(t,x)+\mathcal{P} (h(\widehat{\rho}(t,x)))\widehat{\rho}(t,x)\Bigr)_i.\label{eq:coeff_um2}\tag{UM2}
	\end{align} 
	Let $\widehat{g}_i$ be a strong solution to \eqref{eq:UBGK} and \eqref{eq:mg} for $i=0,\dots,K.$
	\medskip
	Further, assume that for $i=0,\dots,K$ and $(t,x) \in \mathbb{R}^+\times \mathbb{R}$
	\begin{equation}\label{eq:ass_thm1}
	\int_W w^2 \; \widehat{g}_i(t,x,w) dw= (\mathcal{P}(\widehat{q}(t,x))\mathcal{P}^{-1}(\widehat{\rho}(t,x))\widehat{q}(t,x))_i,
	\end{equation}
	where $(\widehat{\rho},\widehat{q})_i$ are the first and second moment of $\widehat{g}_i$ as in \eqref{def:rho}--\eqref{def:q} and $\mathcal{P}$ is defined by \eqref{DefPalpha}.\\
	Then, the functions $(\widehat{\rho},\widehat{q})$ formally fulfill pointwise in $(t,x)\in\mathbb{R}^+\times \mathbb{R}$ and for all $i=0,\dots,K$ the second--order traffic flow model
	\begin{subequations} \label{eq:UARZ}
		\begin{align}
		&   \partial_t \widehat{\rho}_i(t,x) + \partial_x \left[\widehat{q}_i(t,x)- (\mathcal{P}(\widehat{\rho}(t,x))\widehat{\rho}(t,x))_i\right]=0\\
		&   \partial_t \widehat{q}_i(t,x) + \partial_x \left[ (\mathcal{P}(\widehat{q}(t,x))\mathcal{P}^{-1}(\widehat{\rho}(t,x))\widehat{q}(t,x))_i - (\mathcal{P}(\widehat{\rho}(t,x))\widehat{q}(t,x))_i \right]= \\
		& \quad \frac1\varepsilon \Bigl( \Bigl( \mathcal{P} (V_{eq}(\widehat{\rho}(t,x)))\widehat{\rho}(t,x)+\mathcal{P} (h(\widehat{\rho}(t,x)))\widehat{\rho}(t,x)\Bigr)_i  - \widehat{q}_i(t,x) \Bigr) \\
		& \widehat{\rho}_i(0,x) = \int_W \widehat{g}_{0,i}(t,x,w) dw, \\
		& \widehat{q}_i(0,x) = \int_W w \; \widehat{g}_{0,i}(t,x,w) dw.
		\end{align}\end{subequations}
	The system \eqref{eq:UARZ} is hyperbolic for $\widehat{\rho}_i>0.$\\
	Let the  random fields $(\rho,q)=(\rho,q)(t,x,\xi):\mathbb{R}^+\times\mathbb{R}\times\Omega \to \mathbb{R}^2$  be a pointwise a.e.\ solution with second moments w.r.t.\ to $\xi$ of  the stochastic Aw--Rascle--Zhang system with random initial data:
	\begin{subequations} \label{eq:UARZ2}
		\begin{align}
		&		\partial_t{\rho}+\partial_x({q}-{\rho} h({\rho}))=0, \\
		&		\partial_t {q}+\partial_x \Bigl(\frac{{q^2}}{{\rho}} - {q} h({\rho})\Bigr)=\frac{1}{\varepsilon}\left( \rho V_{eq}({\rho}) + \rho h(\rho) -{q}\right), \\
		&\rho(0,x,\xi)=\rho_0(x,\xi), \;  q(0,x,\xi)=q_0(x,\xi).
		\end{align}\end{subequations}
	Under the previous assumptions on the base functions $\{ \phi_0, \dots, \phi_K\}$ and provided that for all $i=0,\dots,K$
	\begin{small}
		\begin{align}\label{cond1}
		\int_\Omega\rho_0(x,\xi) \phi_i(\xi) p_\Xi d\xi = \int_W\widehat{g}_{0,i}(t,x,w) dw, \;
		\int_\Omega q_0(x,\xi) \phi_i(\xi) {p_\Xi} d\xi = \int_W w \widehat{g}_{0,i}(t,x,w) dw,
		\end{align}
	\end{small}
	we have
	\begin{align}\label{thm:final}
	G_K\left(\rho(t,x,\cdot) \right)(\xi)=\sum\limits_{i=0}^K \widehat{\rho}_i(t,x)\phi_i(\xi) \mbox{ and }  G_K\left(q(t,x,\cdot)\right)(\xi)= \sum\limits_{i=0}^K \widehat{q}_i(t,x)\phi_i(\xi),
	\end{align}
	where $(\widehat{\rho},\widehat{q})$ fulfill equation \eqref{eq:UARZ}.
\end{theorem}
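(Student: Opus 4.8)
The plan is to prove Theorem~\ref{thm1} in two stages: first derive the gPC fluid system \eqref{eq:UARZ} from the stochastic Galerkin BGK system \eqref{eq:UBGK}, and second identify this system with the stochastic Galerkin projection of the stochastic ARZ model \eqref{eq:UARZ2}. For the first stage, I would take the moments of \eqref{eq:UBGK} in $w$, i.e.\ integrate the $i$-th equation against $1$ and against $w$ over $W$. Integrating against $1$: the transport term $\partial_x\bigl((w\,\mathrm{Id}-\mathcal{P}(h(\widehat\rho)))\widehat g\bigr)_i$ yields $\partial_x\bigl(\widehat q_i-(\mathcal{P}(h(\widehat\rho))\widehat\rho)_i\bigr)$ after using $\int_W w\,\widehat g_i\,dw=\widehat q_i$ and $\int_W \widehat g_i\,dw=\widehat\rho_i$; the relaxation term vanishes by \eqref{eq:coeff_um1} since $\int_W\widehat M_i\,dw=\widehat\rho_i$. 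Taking $h=0$ here would give LWR, but keeping $h$ gives the first equation of \eqref{eq:UARZ} (note the stated first equation uses $V_{eq}$ implicitly through the convention $\mathcal{P}(\widehat\rho)\widehat\rho$ playing the role of $\rho h(\rho)$ with the normalization of the excerpt). Integrating against $w$: the transport term produces $\partial_x\bigl(\int_W w^2\widehat g_i\,dw-(\mathcal{P}(h(\widehat\rho))\widehat q)_i\bigr)$, and here the crucial closure assumption \eqref{eq:ass_thm1} is invoked to replace $\int_W w^2\widehat g_i\,dw$ by $(\mathcal{P}(\widehat q)\mathcal{P}^{-1}(\widehat\rho)\widehat q)_i$; the relaxation term becomes $\frac1\varepsilon\bigl((\mathcal{P}(V_{eq}(\widehat\rho))\widehat\rho+\mathcal{P}(h(\widehat\rho))\widehat\rho)_i-\widehat q_i\bigr)$ by \eqref{eq:coeff_um2}. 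This yields exactly the second equation of \eqref{eq:UARZ}, and the initial conditions follow from the definitions of $\widehat\rho_i(0,\cdot)$, $\widehat q_i(0,\cdot)$ as moments of $\widehat g_{0,i}$.

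For hyperbolicity of \eqref{eq:UARZ} for $\widehat\rho_i>0$, I would rewrite the flux using the Riemann-invariant substitution $\widehat q=\mathcal{P}(\widehat\rho)\widehat w$, so $\mathcal{P}(\widehat q)\mathcal{P}^{-1}(\widehat\rho)\widehat q=\mathcal{P}(\widehat q)\widehat w$, and then compute the Jacobian of the flux with respect to $(\widehat\rho,\widehat q)$. Under (A1)--(A3) all the matrices $\mathcal{P}(\cdot)$ are simultaneously diagonalizable by the constant orthogonal matrix $V$, so the block Jacobian is conjugate (via $\mathrm{diag}(V,V)$) to a system that decouples into $K+1$ scalar $2\times 2$ ARZ Jacobians, one per diagonalized mode; each of these has the real eigenvalues of the classical ARZ flux and a full eigenbasis, whence hyperbolicity. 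This is precisely the content invoked from \cite[Thm 2]{gerster2021stability}, so I would cite it rather than redo the eigenvalue computation.

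For the second stage, I would apply the stochastic Galerkin recipe of Section~\ref{sec:unc} directly to \eqref{eq:UARZ2}: insert the truncated expansions $G_K(\rho)=\sum_i\widehat\rho_i\phi_i$, $G_K(q)=\sum_i\widehat q_i\phi_i$, and project against each $\phi_i$. The terms $\rho h(\rho)$ and $q\,h(\rho)$ project (by definition of the Galerkin product and of $\mathcal{P}$) to $(\mathcal{P}(\widehat\rho)\widehat h(\widehat\rho))_i$ and $(\mathcal{P}(\widehat q)\widehat h(\widehat\rho))_i$; the nonlinear term $q^2/\rho=qw$ with $w=v+h(\rho)$ and $z=\rho w$ projects — exactly as recalled just before the theorem in the excerpt, following \cite{gerster2021stability} — to $(\mathcal{P}(\widehat q)\mathcal{P}^{-1}(\widehat\rho)\widehat q)_i$, which requires $\mathcal{P}(\widehat\rho)$ invertible, guaranteed by $\widehat\rho_i>0$ together with (A2)--(A3). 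The right-hand side relaxation term projects linearly to $\frac1\varepsilon\bigl((\mathcal{P}(V_{eq}(\widehat\rho))\widehat\rho+\mathcal{P}(h(\widehat\rho))\widehat\rho)_i-\widehat q_i\bigr)$. Thus the projected ARZ system coincides termwise with \eqref{eq:UARZ}. Finally, the compatibility of initial data \eqref{cond1} matches the initial conditions of the two gPC systems, so $G_K(\rho(0,x,\cdot))$ and $G_K(q(0,x,\cdot))$ have the same coefficients as the $\widehat\rho_i(0,x),\widehat q_i(0,x)$ obtained from $\widehat g_{0,i}$; since both coefficient vectors then satisfy the same (deterministic) system \eqref{eq:UARZ} with the same initial data, they agree for all $(t,x)$, which is \eqref{thm:final}.

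The main obstacle I anticipate is the handling of the quadratic flux term and its projection: showing rigorously that the Galerkin projection of $q^2/\rho$ equals $(\mathcal{P}(\widehat q)\mathcal{P}^{-1}(\widehat\rho)\widehat q)_i$ is not a formal manipulation unless one passes through the Riemann invariant $w$ and uses that the Galerkin product, while not associative in general, does satisfy the identities $\widehat z=\mathcal{P}(\widehat\rho)\widehat w$ and $\widehat z\ast\widehat w=\mathcal{P}(\widehat z)\mathcal{P}^{-1}(\widehat\rho)\widehat z$ precisely under (A1)--(A3) — this is where commutativity of the $\mathcal{P}(\cdot)$ matrices is essential, and where the closure assumption \eqref{eq:ass_thm1} must be consistent with this algebraic structure. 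A secondary subtlety is purely bookkeeping: the excerpt's first equation of \eqref{eq:UARZ} is written as $\partial_x[\widehat q_i-(\mathcal{P}(\widehat\rho)\widehat\rho)_i]$ whereas \eqref{eq:UARZ2} has $\partial_x(q-\rho h(\rho))$, so one must track the normalization convention (equivalently, absorb $V_{eq}$ and $h$ consistently) to see the two match; this is routine but should be stated explicitly to avoid an apparent discrepancy. Everything else — the moment integrations, the linearity of the relaxation projection, and the uniqueness-of-solutions argument closing the identification — is straightforward given the assumptions and the cited results.
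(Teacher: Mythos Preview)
Your proposal is sound and follows the expected route: take the zeroth and first $w$-moments of the gPC--BGK system \eqref{eq:UBGK}, close them using \eqref{eq:coeff_um1}, \eqref{eq:coeff_um2} and the second-moment hypothesis \eqref{eq:ass_thm1}, then identify the resulting fluid system with the Galerkin projection of \eqref{eq:UARZ2} via the Riemann-invariant substitution $\widehat q=\mathcal{P}(\widehat\rho)\widehat w$, and finally invoke simultaneous diagonalization under (A1)--(A3) for hyperbolicity. Note, however, that the present paper does \emph{not} supply its own proof: the theorem is quoted from \cite[Thm~2.2]{herty2022uncertainty} and the text immediately after it reads ``For the proof we refer to \cite{herty2022uncertainty}.'' There is therefore nothing in the paper to compare against beyond the surrounding discussion, which your sketch matches exactly --- the paragraph preceding the theorem already recalls the projection $q^2/\rho\mapsto \mathcal{P}(\widehat q)\mathcal{P}^{-1}(\widehat\rho)\widehat q$ from \cite{gerster2021stability}, and hyperbolicity is attributed there to \cite[Thm~2]{gerster2021stability}, just as you propose to do.

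Your remark on the apparent bookkeeping discrepancy in the first equation of \eqref{eq:UARZ} is also well taken: the flux term $(\mathcal{P}(\widehat\rho)\widehat\rho)_i$ there (and $(\mathcal{P}(\widehat\rho)\widehat q)_i$ in the second equation) corresponds to the particular choice $h(\rho)=\rho$ rather than a general hesitation function; this is a notational specialization in the quoted statement, not a flaw in your argument. Likewise, your caution about the non-associativity of the Galerkin product and the need for (A1)--(A3) to justify $\widehat z\ast\widehat w=\mathcal{P}(\widehat z)\mathcal{P}^{-1}(\widehat\rho)\widehat z$ is exactly the point stressed in \cite{gerster2021stability} and recalled in the paper; it is the only place where the structural assumptions on the basis are genuinely used.
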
}

For the proof we refer to \cite{herty2022uncertainty}.

\subsection{Numerical test}
Numerically, we focus on the macroscopic scale, in order to exploit the strength of the stochastic Galerkin approach.
Indeed, thanks to the stochastic Galerkin formulation, the stochastic quantities can be recovered at each time step in every point of the space grid, solving only once the coefficients system.
In particular, in the following we focus on the fundamental diagram. 
To this aim, we run simulations for the stochastic LWR model and we reconstruct an approximation of the stochastic fundamental diagram from the Galerkin coefficients of the density, namely $\tilde{f}(t,x,\xi)=   \mathcal{P}(\widehat{\rho}(t,x) ) \widehat{V_{eq}}$, where $\widehat{V_{eq}}_i=(e_1)_i - \widehat{\rho}_i$ for $i=0,\dots,K$.

We employ the local Lax-Friederichs scheme to solve the PDE for each coefficient \eqref{eq:SG_LWR}.
The numerical parameters are as follows.  We consider the space interval $x\in [a,b]=[0,2]$ and define the uniform spatial grid of size  $\Delta x=1\cdot 10^{-2}$. Moreover, let $T_f=1$ be the final time of the simulations and $\Delta t$ the time step, which is chosen in such a way that the CFL condition is fulfilled. By $N_t$ we denote the number of the time steps needed to reach $T_f$. The random variable $\xi$ is assumed {to} be uniform distributed on $(0,1)$, i.e., $p_\Xi=1$ and $\Omega=(0,1).$ As basis functions we consider the Haar basis. The prototype of the initial data is a Riemann problem:
\begin{equation}
	\rho_0(x,\xi)=\begin{cases}
	\rho_l\equiv \xi \sim \mathcal{U}(u_1, u_2)   \qquad & x<1\\
	\rho_r & x\ge 1
	\end{cases},
\end{equation}
with $\rho_l,\rho_r,u_1,u_2 \in \mathbb [0,1]$.

Moreover, according to the results presented in \cite{gerster2021stability,herty2022uncertainty} where the numerical convergence with respect to $K$ is studied, we choose $K=15$.

In order to understand how the uncertainty in the initial data affects the shape of the fundamental diagram at $T_f=1$, we consider several initial data, fixing $\rho_l\sim \mathcal{U}(0.75, 0.95)$ and varying $\rho_r \in [0,1]$. We mainly focus on the rarefaction case since the density is more spread, with respect to the shock case, and therefore the reconstruction of the fundamental diagram is more accurate. 
In Fig.\ \ref{fig:cloud} we plot the mean, i.e.\ the 0-coefficient, of the fundamental diagram. One may recognize the typical form of the Greenshields flux function. However, a cloud of points is also present approximately for $\rho \in (0.35,0.83)$, which means that the same density value could lead to different velocities. Moreover, we note that this scatter behavior affects only region where there is the transition between free flow and congested flow. 
Furthermore, for $\rho>0.85$ we still observe a scattering in the flux, right box in Fig.\ \ref{fig:cloud}, while in the free flow there is none, left box in Fig.\ \ref{fig:cloud}.

 It is important to note that the scattered fundamental diagram we observe here, is typical of traffic dynamics \cite{Siebel2005}, and it can be recovered from a first order macroscopic model with uncertainty only in the initial data. A deterministic first order model fails in reproducing such a scattered dynamics. .

\begin{figure}
	\centering
	\includegraphics[scale=0.35]{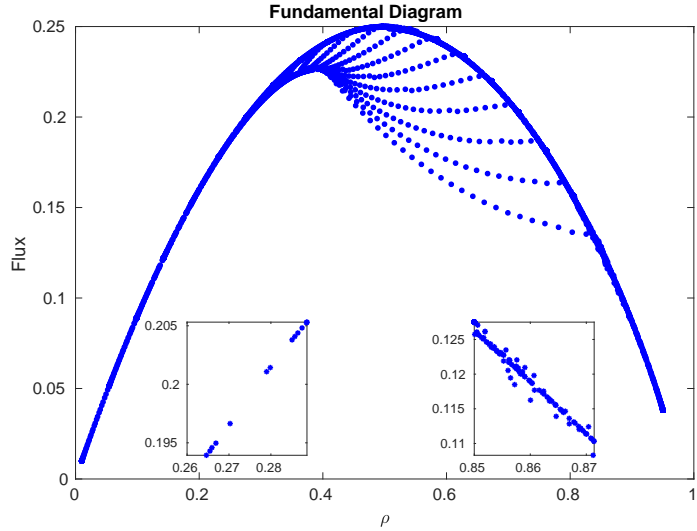}
	\caption{Mean of the fundamental diagram for different initial data}\label{fig:cloud}
\end{figure}

In order to investigate the uncertainty more in details, the variance is also taken into account. The approximation of the variance is computed by the sum of all the coefficients but the first one, squared, i.e.\ for the density $Var(\rho)\approx \sum_{k=1}^K \widehat{\rho}_k^2$. In Fig.\ \ref{fig: FD}, the blue dots stand for the  mean of the reconstructed fundamental diagram while the red bars indicate the mean plus and minus the variance, both for density and flux. It is very interesting to note that while for $\rho\in [0.5,0.8]$ a very small variance in the density corresponds to high variance in the flux, for a more congested traffic the scenario is the opposite, right box in Fig.\ \ref{fig: FD}. This can be explained as follows: in the highly congested traffic situation the velocity is close to $0$ even for some variation of the density values and this causes a very small variation in the flux with respect to the density. 
On the other hand, switching from free flow to congested regime, very small changes in the density values lead to higher uncertainty in the flux value since the velocity has a large impact close to the maximum of the flux, left box in Fig.\ \ref{fig: FD}.

\begin{figure}
	\centering
	\includegraphics[scale=0.4]{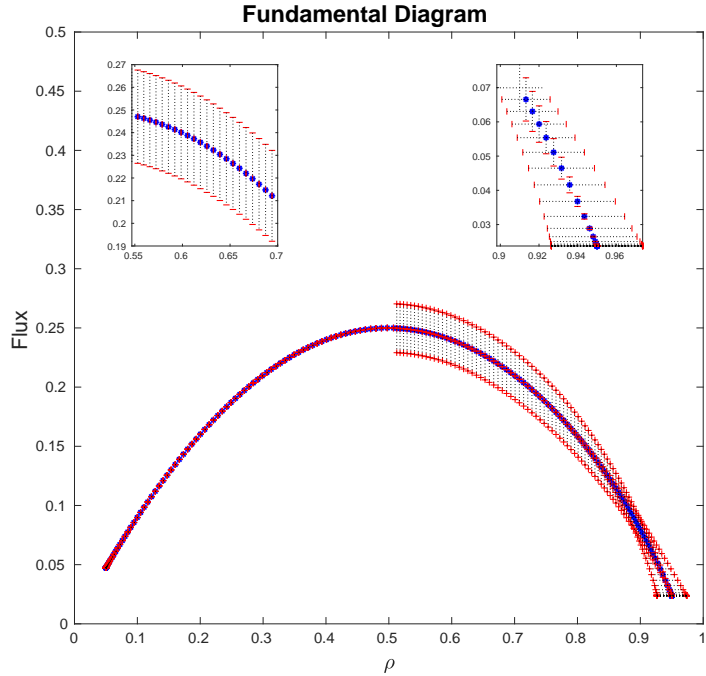}
	\caption{Fundamental diagram with mean and variance.}\label{fig: FD}
\end{figure}

\section{Conclusion and future perspectives}

The presented overview aims at providing a unified framework on how to deal with uncertainty in traffic flow models at different scales of observation. In particular, starting from the microscopic models, to the kinetic equation and finally to the macroscopic ones, the uncertainty was introduced in the initial data in a consistent way. Moreover, the stochastic quantities were treated in the same way following the intrusive stochastic Galerkin approach: first we performed the gPC expansion of the stochastic quantities, we truncated them and put them into the respective evolution equations projecting with the Galerkin ansatz. After presenting the proper stochastic Galerkin formulations for the different systems, we presented some connections between the scales and some numerical simulations which show the intrinsic influence of the uncertainty on the characteristic law of traffic. 

However, a deeper understanding of the link between the Galerkin coefficients at different scales, in particular between microscopic and macroscopic, is still an open question.

\begin{acknowledgement}
The author thanks the Deutsche Forschungsgemeinschaft (DFG, German Research Foundation) for the financial support through 20021702/GRK2326,  333849990/IRTG-2379, HE5386/19-1,22-1,23-1 and under Germany's Excellence Strategy EXC-2023 Internet of Production 390621612.\\
The author is member of the “National Group for Scientific Computation (GNCS-INDAM)”.
\end{acknowledgement}
%

\bibliography{mybib}
\bibliographystyle{spmpsci}
\end{document}